\newtheorem{theorem}{Theorem}
\newtheorem{corollary}{Corollary}
\newtheorem{lemma}{Lemma}
\newcommand{\wrd}{W^r_d}
\newcommand{\nd	}{\mbox{ and }}
\newcommand{\cliff}{\mbox{Cliff}}
\newcommand{\mco}{\mathcal{O}}
\newcommand{\mck}{\mathcal{K}}
\newcommand{\coker}{\mbox{coker}}
\newcommand{\Hom}{\mbox{Hom}}
\begin{document}

\title{A Shorter Proof of Marten's Theorem}
\author{Adam Ginensky}
\date{6-2016}
\maketitle

\begin{abstract}
This note is dedicated to the memory of my friend and teacher R. Narasimhan. Marten's theorem is only one of the thousands of topics that Narasimhan could, and would, talk about with great enthusiasm and  with great depth of knowledge.
\end{abstract}

\tableofcontents

\section{Introduction and Notation	}

This note is intended to give a new proof on Marten's theorem stating that  $\dim(\wrd) \leq d-2r$ for any smooth curve with equality occurring exactly in the case when C is hyper-elliptic.  The proof follows the general lines of the proof given in 'The Geometry of Algebraic Curves'. What is different is that it uses Hopf's theorem to simplify the proof and strengthen the conclusions of the theorem. More specifically Mumford and Keem have explicated the curves for which  $\dim(\wrd) = d-2r-1$ . Our analysis allows us to state for any $ e>0$  which curves are eligible to have the bound $ \dim(\wrd) = d-2r-e$.  This is done in terms of the Clifford index of C. 

Finally, we sketch a proof of Hopf's Theorem.  In \cite{ACGH} this result is credited to Hopf, but no proof or reference is given. In  \url{http://mathoverflow.net/questions/156674/who-stated-and-proved-the-hopf-lemma-on-bilinear-maps} some references are given, but my reading of the references do not show a proof that an algebraic geometer might easily follow. Prof. Mohan Kumar has communicated to me an algebro-geometric proof of this result.  With his permission it is included in this note. I would also like to thank Prof. Kumar for reading an earlier version of this note and giving me corrections and useful comments. 

Mathematics has always been an avocation and not a vocation for me.  My lack of daily contact with mathematics and mathematicians makes me particularly prone to make errors and have slips of form.  For that reason all comments and corrections are greatly appreciated.  
\vspace{3mm}

\vspace{3mm}

We assume the following throughout-
\begin{itemize}
	\item C will be a smooth curve defined over an algebraically closed field. 
	\item L a special line bundle on C
	\item $\deg(L) = d ,\nd d \leq g-2 \nd h^0(L) = r+1 $
	\item that is  $L \in W^r_d$
	
\end{itemize}

 I will denote by $\alpha_L$ the Petri map 
\[ H^0(L) \otimes H^0(K_C \otimes L^{-1})\stackrel{\alpha_L}{\rightarrow}  H^0(K_C) . \]  
Notice that this map is always injective on each factor.  
In addition recall that  If $ L \cong \mathcal{O}_C(D)$, then $\cliff(L) = \cliff(D) = d-2r$ .
Finally we recall the basic result due to Hopf and it's corollary.\\

\begin{lemma}[Hopf] 
\label{hopfLemma}

Let $A\otimes B \to C$ be a bi-linear map of vector spaces defined over an algebraically closed field with $\dim(A) = a \nd \dim(B) = b$ .  If the map is injective on each factor, then $\dim(Im(A \otimes B) \geq a+b - 1$
\end{lemma}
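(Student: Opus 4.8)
The plan is to convert the ``injective on each factor'' hypothesis into a statement about a subbundle on a projective space and then read off the inequality from a single Chern class computation. Note first what the hypothesis gives: for a fixed nonzero $b \in B$ the map $a \mapsto \mu(a,b)$ is injective (and symmetrically in $a$), so $\mu(a,b)=0$ forces $a=0$ or $b=0$. Write $\mu \colon A \otimes B \to C$ for the map, let $W \subseteq C$ be the linear span of its image, and set $n := \dim W$; the claim is $n \geq a+b-1$. Since the image $\mu(A,b)$ of any single nonzero $b$ already has dimension $a$, we have $n \geq a$, so the cokernel constructed below will have nonnegative rank.

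First I would manufacture a vector bundle map on $\pp(B) = \pp^{b-1}$. Let $h$ be the hyperplane class and let $\mco(-1) \hookrightarrow B \otimes \mco$ be the tautological subbundle, whose fiber over $[b]$ is the line $\ell \subseteq B$ spanned by $b$. The bilinear map induces a bundle map $u \colon A \otimes \mco(-1) \to W \otimes \mco$ sending $a \otimes \ell$ to $\mu(a,b)$. The crucial point is that over each point $[b]$ this is exactly the injective linear map $a \mapsto \mu(a,b)$, so $u$ is injective on every fiber. This is the only place the hypothesis is used, and it is precisely what guarantees that $u$ is a subbundle inclusion rather than merely a generically injective map of sheaves.

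Next I would pass to the quotient. Because $u$ is fiberwise injective, $Q := \coker(u)$ is locally free of rank $n-a$, and there is a short exact sequence $0 \to A \otimes \mco(-1) \to W \otimes \mco \to Q \to 0$ of bundles on $\pp^{b-1}$. Taking total Chern classes, and using that $W \otimes \mco$ is trivial while $A \otimes \mco(-1)$ is a sum of $a$ copies of $\mco(-1)$, gives $c(Q) = (1-h)^{-a} = \sum_{i \geq 0} \binom{a+i-1}{i} h^i$, so that $c_i(Q) = \binom{a+i-1}{i} h^i$ for every $i$.

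Finally I would extract the bound. Since $Q$ has rank $n-a$, its Chern classes vanish in degrees exceeding $n-a$. On the other hand, in the Chow ring $\mathbb{Z}[h]/(h^b)$ one has $\binom{a+i-1}{i} \neq 0$ and $h^i \neq 0$ for every $i \leq b-1$, so $c_i(Q) \neq 0$ for such $i$. Hence no index $i$ can satisfy $n-a < i \leq b-1$; that is, $b-1 \leq n-a$, which is exactly $n \geq a+b-1$. The main obstacle is the fiberwise injectivity in the second step, since that is what makes $\coker(u)$ a genuine vector bundle and legitimizes the clean Chern identity; everything after it is a formal computation in $A^*(\pp^{b-1})$. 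As a sanity check, the case $b=2$ recovers the base-point-free pencil trick, where coincidence of the two $a$-planes $\mu(A,b_0)$ and $\mu(A,b_1)$ would produce, via an eigenvalue of the comparison map over the algebraically closed field, a nonzero zero divisor.
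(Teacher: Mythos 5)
Your proof is correct, and it takes a genuinely different route from the one in the paper (due to Mohan Kumar). The paper argues by contradiction: it normalizes $\dim(C)=c=a+b-2$, regards the injective maps $f_b \colon A \to C$ attached to a basis of $B$ as spanning a linear subspace $H \cong \pp^{b-1}$ inside $\pp\left(\Hom\left(A,C\right)\right)$, cites the codimension formula for determinantal varieties (the non-injective maps form a subvariety $X_1$ of codimension $c-a+1=b-1$, referenced to \cite{ACGH} p.~67), and then invokes the fact that over an algebraically closed field subvarieties of $\pp^{ac-1}$ of complementary dimension must meet, producing a nonzero $b$ with $f_b$ non-injective, a contradiction. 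You work on the same parameter space $\pp(B)=\pp^{b-1}$, but instead of embedding it in a space of matrices you assemble the maps $f_b$ into the bundle map $u \colon A \otimes \mco(-1) \to W \otimes \mco$ and convert the \emph{emptiness} of its degeneracy locus into the Chern-class constraint: $c(\coker(u))=(1-h)^{-a}$ must truncate in degree $\mathrm{rk}(\coker(u))=n-a$, forcing $n-a \geq b-1$. These are dual faces of the same degeneracy-locus principle --- the paper shows a too-small target forces the degeneracy locus to be nonempty (a dimension count), while you show an empty degeneracy locus forces the image to be large (a Chern-class obstruction, essentially Thom--Porteous in its simplest case). What your version buys: it is direct rather than by contradiction, and it replaces the two external inputs (the determinantal codimension formula and the projective dimension theorem) by the single standard fact $A^*(\pp^{b-1})=\mathbb{Z}[h]/(h^b)$ together with vanishing of Chern classes above the rank. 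One point deserves explicit emphasis in your write-up, since it is the only place the algebraic-closedness hypothesis enters: fiberwise injectivity of $u$ must hold at \emph{every} closed point of $\pp(B)$, and only over an algebraically closed field is every closed point of the form $[b]$ for an actual vector $b \in B$. This is not a gap in your argument as stated, but it is precisely how the standard counterexample over $\mathbb{R}$ --- complex multiplication $\mathbb{C} \otimes_{\mathbb{R}} \mathbb{C} \to \mathbb{C}$, injective on each factor yet with image of dimension $2 < 2+2-1$ --- evades your construction: $\pp^1_{\mathbb{R}}$ has closed points with residue field $\mathbb{C}$, and at those points fiberwise injectivity fails, so the cokernel is not locally free there.
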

%
 %

\begin{corollary}
L as above,

\begin{itemize}
	\item $\dim(\alpha_L)  \geq g - \cliff (L) $
	\item $\dim(\coker(\alpha_L)) \leq \cliff(L)$
\end{itemize}
\end{corollary}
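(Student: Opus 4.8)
The plan is to apply Hopf's lemma (Lemma \ref{hopfLemma}) directly to the Petri map, taking $A = H^0(L)$ and $B = H^0(K_C \otimes L^{-1})$. The first thing I would do is record the two factor dimensions. By hypothesis $\dim A = h^0(L) = r+1$. For the second factor I would invoke Serre duality to write $\dim B = h^0(K_C \otimes L^{-1}) = h^1(L)$, and then Riemann--Roch, $h^0(L) - h^1(L) = d - g + 1$, gives
\[ \dim B = h^1(L) = (r+1) - (d - g + 1) = g - d + r. \]
Note that the assumption that $L$ is special (and $d \leq g-2$) guarantees $h^1(L) > 0$, so both factors are nonzero and the lemma is not being applied vacuously.

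Next, the excerpt already asserts that $\alpha_L$ is injective on each factor, which is precisely the hypothesis of Hopf's lemma. Applying it yields
\[ \dim(\mathrm{Im}(\alpha_L)) \geq \dim A + \dim B - 1 = (r+1) + (g - d + r) - 1 = g - (d - 2r). \]
Since $\cliff(L) = d - 2r$, the right-hand side is exactly $g - \cliff(L)$, which is the first bullet (here $\dim(\alpha_L)$ in the statement denotes the dimension of the image).

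Finally, the second bullet follows immediately from the first by dualizing the count against the target $H^0(K_C)$, whose dimension is $g$. Indeed,
\[ \dim(\coker(\alpha_L)) = g - \dim(\mathrm{Im}(\alpha_L)) \leq g - \bigl(g - \cliff(L)\bigr) = \cliff(L). \]
I do not expect a genuine obstacle in this argument: Hopf's lemma does all the real work, and the injectivity hypothesis is handed to us. The only step demanding any care is the dimension count for the second factor, where Serre duality and Riemann--Roch must be combined correctly to produce $g - d + r$; an error there would propagate directly into the final bound. Everything else is bookkeeping.
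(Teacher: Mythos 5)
Your proposal is correct and follows essentially the same route as the paper: both apply Hopf's lemma to the Petri map with $A = H^0(L)$, $B = H^0(K_C \otimes L^{-1})$, $C = H^0(K_C)$, use the count $(r+1) + (g-d+r) - 1 = g - (d-2r) = g - \cliff(L)$ for the first bullet, and obtain the second bullet from the first since the target has dimension $g$. Your only additions are the explicit Serre duality/Riemann--Roch verification that $\dim B = g-d+r$ and the observation that speciality makes both factors nonzero, which the paper leaves implicit.
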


\begin{proof}   Since $\alpha_L$ is injective on each factor and $(r+1) + (g-d+r) = g -(d-2r) +1 $, the first item follows from Hopf's lemma with $A = H^0(L), B = H^0(K_C \otimes L^{-1}) , \nd C = H^0(K_C)$.  The second assertion follows directly from the first assertion. 
\end{proof}  

\section{Marten's Theorem }

Marten's theorem bounds the dimension of $W_r^d$ .  We follow the proof in \cite{ACGH} p.192.   . Starting with the fact, 
\[ \dim(W_d^{r+1}) \leq \dim(T_{L,W_d^{r+1}}) = \dim(\coker(\alpha_L)) \]

 for a general $ L \in \wrd$, the authors then show  that $\dim(W_d^{r+1}) > d-2r$ leads to $h^0(K_C \otimes L^{-2})$ being too large to satisfy the conditions of Clifford's theorem .  We present a shorter and more direct proof using the corollary to Hopf's Lemma.

\begin{theorem}[Marten] $\dim(W_r^d) \leq d-2r$ and if C is not hyperelliptic, then $\dim(W_r^d) \leq d-2r-1$  
\end{theorem}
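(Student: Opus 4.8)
The plan is to bound $\dim\wrd$ at every point by the dimension of its Zariski tangent space, and then to extract hyperellipticity from the case in which that bound is attained with equality. The starting point is the standard identification (already invoked in the passage quoted from \cite{ACGH}) of the tangent space to $\wrd$ at a line bundle $L$ with $h^0(L)=r+1$: one has $T_L\wrd\cong(\mathrm{Im}\,\alpha_L)^\perp$ inside $T_L(\Pic^d C)\cong H^0(K_C)^*$, so that $\dim T_L\wrd = g-\dim(\mathrm{Im}\,\alpha_L)=\dim\coker(\alpha_L)$. The corollary to Hopf's lemma then gives at once $\dim T_L\wrd=\dim\coker(\alpha_L)\le\cliff(L)=d-2r$.

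To pass from the tangent space to $\wrd$ itself, I would argue component by component. Let $X$ be an irreducible component of $\wrd$ and let $s+1$ be the value of $h^0$ at a general point $L\in X$; then $s\ge r$, $X\subseteq W^s_d$, and $L$ is a point of $W^s_d$ with $h^0(L)=s+1$ exactly, so the tangent-space computation above applies with $r$ replaced by $s$. Since the local dimension of a variety is bounded by its embedding dimension, $\dim X=\dim_L X\le\dim T_L W^s_d\le d-2s\le d-2r$. As $X$ was arbitrary, this yields $\dim\wrd\le d-2r$.

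Now suppose $C$ is not hyperelliptic yet $\dim\wrd=d-2r$. Choosing $X$ of maximal dimension and $L\in X$ general, every inequality in the previous paragraph must be an equality: in particular $s=r$, so $h^0(L)=r+1$, and $\dim(\mathrm{Im}\,\alpha_L)=g-\cliff(L)=a+b-1$, where $a=\dim H^0(L)=r+1$ and $b=\dim H^0(K_C\otimes L^{-1})=g-d+r$. Thus the Petri map realizes the extremal case of Hopf's lemma exactly, and the whole refinement rests on understanding this extremal case.

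I expect that to be the main obstacle, and I would attack it through the rigidity of minimal bilinear maps: when a bilinear map injective on each factor has image of the smallest possible dimension $a+b-1$, one should be able to choose bases of $A=H^0(L)$ and $B=H^0(K_C\otimes L^{-1})$ in which the multiplication table is that of polynomial multiplication in a single variable, a Hankel-type, rational-normal structure. Geometrically this forces $|L|$ and $|K_C\otimes L^{-1}|$ to be composed with one and the same pencil on $C$; counting degrees, that common pencil must be a $g^1_2$, i.e.\ a degree-$2$ map $C\to\pp^1$, contradicting non-hyperellipticity. Equivalently, extremality feeds back (as in \cite{ACGH}) into an estimate for $h^0(K_C\otimes L^{-2})$ that violates Clifford's theorem unless $C$ is hyperelliptic. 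Proving the rigidity statement cleanly, and translating ``composed with a common pencil'' into the existence of a $g^1_2$, is where the real work lies.
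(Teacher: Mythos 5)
Your first half is correct and is essentially the paper's own argument: at a point $L$ with $h^0(L)$ exactly $r+1$, Proposition 4.2 of \cite{ACGH} identifies $\dim T_L\wrd$ with $\dim\coker(\alpha_L)$, and the corollary to Hopf's lemma bounds this by $\cliff(L)=d-2r$. Your component-by-component reduction (replacing $r$ by the generic value $s$ on each component and using $d-2s\le d-2r$) is, if anything, more careful than the paper's one-line version of that step, which tacitly assumes the general point of a component lies outside $W^{r+1}_d$.

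The genuine gap is in the second half, and you flag it yourself (``where the real work lies''). You correctly reduce to the statement that a general $L$ in a maximal component realizes equality $\dim\mbox{Im}(\alpha_L)=a+b-1$ in Hopf's lemma, but neither tool you then invoke is proved. The rigidity claim --- that an extremal bilinear map injective in each factor is, in suitable bases, polynomial multiplication --- is a real theorem when the map is multiplication of subspaces of a field extension (a linear analogue of Vosper's critical-pair theorem), but it is substantially deeper than Marten's theorem and cannot be cited as folklore; for abstract bilinear maps it needs the theory of $1$-generic matrices. Moreover, even granting rigidity, your degree count does not force the common pencil to be a $g^1_2$: if the pencil has degree $k$, composedness only gives $kr\le d$ and $k(g-d+r-1)\le 2g-2-d$, which does not by itself exclude $k\ge 3$; to rule that out you would additionally need the dimension count that bundles composed with a $g^1_3$ move in a family of dimension at most $d-3r<d-2r$. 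The paper's actual route (its Section 3) is much cheaper and is the one to internalize: writing $\dim\mbox{Im}(\alpha_L)=g-(d-2r)+e$, the base-point-free pencil trick applied to the kernel sheaves $\mck_k$ gives $h^0(K_C\otimes L^{-2})\ge g-d-e$; by Riemann--Roch this means $\cliff(2D)\le 2e$, and since $d\le g-2$ and $r\ge 1$ make $2D$ eligible for computing the Clifford index, $\cliff(C)\le 2e$. At a general point of a $(d-2r)$-dimensional component one has $e=0$, hence $\cliff(C)=0$, which by the equality case of Clifford's theorem means $C$ is hyperelliptic. This uses nothing beyond the pencil trick and Clifford's theorem, and it is exactly what generalizes to the paper's Main Result $\dim(\wrd)\le d-2r-\left\lceil \frac{c}{2}\right\rceil$.
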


	\begin{proof} 
	As remarked above, it is enough to bound the tangent space, $T_{L,W_d^{r}}$ at a point $ L \notin T_{L,W_d^{r+1}}$.  By proposition 4.2 on p .189 of \cite{ACGH} we know that if $L \notin W_c^{r+1}$, then  \[ \dim(T_{L,W_d^{r}}) = \dim(\coker(\alpha_L ) )\] and by the corollary to Hopf's theorem, 
	\[ \dim(\coker(\alpha_L ) \leq \cliff(L) = d-2r  \] This bound is clearly achieved for hyperelliptic curves  The better bound in the case that C is not hyperelliptic will be explained and generalized in the next section. 
	\end{proof}

 \section{Extensions of Marten's Theorem}

The sharper result that in fact $\dim(W_r^d) \leq d-2r-1$ if C is not hyperelliptic follows from an inductive argument that bounds from below $\dim(h^0(K_C \otimes L^{-2}))$ .  Mumford (and Keem) have complimented the analysis by analyzing the border cases when $\dim(W_r^d) = d-2r-1$. We will present an argument below that we believe simplifies the exposition in \cite{ACGH}. The proof allows us to deduce a corollary that relates the dimension of $W_r^d $ to the Clifford index of the curve.  While not as sharp as Mumford's result for the border case  $\dim(W_r^d) = d-2r-1$, it does give new information as to which curves can achieve the bound $\dim(W_r^d) = d-2r-e$ with $ e \geq 2$ . The result is. \vspace{ 3mm}

\textbf{ Main Result:}  If $ \cliff(C) = c$ , then $\dim(W_r^d) \leq d-2r +\left\lceil \frac{c}{2}\right\rceil$. \vspace{ 3mm}

  Firstly, we may assume that L is base point free - else remove the basepoints and perform the argument with the new d. We introduce some notion.  Let ${s_1,s_2, \ldots, s_{r+1} }$ be a basis of $H^0(L)$, such that ${s_1,s_2 }$ span L.  Then for every $k \geq 2$ we get an exact sequence,
	
		\[ 0 \to \mbox{Ker}_k \to \mco^{k} \xrightarrow{(s_1,\ldots , s_k)}  L \to 0
	\]

 and after tensoring with $K_C \otimes L^{-1}$ we get another exact sequence-

 \[  0 \to \mck_k \to  \left((K_C \otimes L^{-1}) \otimes \mco^{k}\right) \stackrel{\alpha_k}  \to K_C \to 0  \] 
where $\mck_k =\mbox{Ker}_k \otimes K_C \otimes L^{-1}$ and $\alpha_k$ is the Petri map restricted to the k given sections. 
By the base point free pencil trick , $\mck_2 \cong K_C \otimes L^{-2} $ and hence  for $k>2$, we have an exact sequence .

\[ 0 \to K_C \otimes L^{-2} \to \mck_{k} \to \left((K_C \otimes L^{-1}) \otimes \mco^{k-2}\right)  \to 0 \]  

Setting $ k = r+1 $ we get an inequality, 
	 \[ h^0(\mck_{r+1}) \leq h^0\left(K_C \otimes L^{-1}\right) + (r-1)h^0(K_C \otimes L^{-2}) = h^0\left(K_C \otimes L^{-1}\right) + (r-1)(g-d+r)    
 \]
If  $\dim((\alpha_L)) = g - \cliff(L) + e $ , then,   
	\[  h^0(\mck_{r+1})  = \dim(\ker(\alpha_L)) = (g-d+r)(r+1) - (g-d + 2r +e),
\]
 
and consequently we can bound $h^0(K_C\otimes L^{-2}) $ from below by (and this is the essence of the argument in \cite{ACGH}) via 
\begin{align*}
	h^0(K_C\otimes L^{-2})  \geq &\dim(h^0(\mck_{r+1})) - (r-1)h^0(K_C \otimes L^{-1})\\
	& = \dim(h^0(\mck_{r+1})) - (r-1)(g-d+r)\\
	& = \left[h^0\left((K_C \otimes L^{-1}) \otimes \mco^{r+1}\right) -\dim(\alpha_L)\right] - (r-1) (g-d+r) \\
  & =(g-d+r)(r+1) - (g-(d - 2r) +e) - (r-1)(g-d+r) \\
	& = g -d -e 
 \end{align*}

	 Since 
		 \[ \deg(K_C\otimes L^{-2}) = 2g-2 - 2d \nd h^0(K_C\otimes L^{-2}) \geq g-d -e
	 \]
		we compute that  
			\[ \cliff(K_C\otimes L^{-2}) = \cliff(\mco(2D) \geq 2g-2-2d - 2(g-d+e-1)  = 2e
		\]
		 The conclusion is, since by assumption $2D$ is eligible to calculate the Clifford index, that $ 2e \geq \cliff(C)$ .  We summarize the discussion in \\
		
		\begin{theorem} Suppose $\dim(\alpha_L) = g-d+2r +e$ , then  $ 2e \geq \cliff(C)   $ and consequently $ e \geq \left\lceil \frac{c}{2}\right\rceil$
		\end{theorem}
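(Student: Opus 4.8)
The plan is to read the theorem straight off the lower bound $h^0(K_C \otimes L^{-2}) \geq g - d - e$ that the preceding computation produces, by converting it into an \emph{upper} bound on $\cliff(K_C \otimes L^{-2})$ and then comparing against the minimum that defines $\cliff(C)$.

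First I would make precise the single identity on which the whole estimate rests, namely $h^0(\mck_{r+1}) = \dim(\ker(\alpha_L))$. Since $\mck_{r+1}$ is by construction the kernel sheaf of $\alpha_{r+1}\colon (K_C \otimes L^{-1}) \otimes \mco^{r+1} \to K_C$ and $H^0$ is left exact, $H^0(\mck_{r+1})$ is the kernel of the induced map on sections; identifying $\mco^{r+1}$ with the basis $s_1,\ldots,s_{r+1}$ of $H^0(L)$ shows that induced map is exactly the Petri map $\alpha_L$. Feeding $\dim(\alpha_L) = g - d + 2r + e$ into the rank count $(r+1)(g-d+r) - \dim(\alpha_L)$, combined with the base-point-free pencil trick $\mck_2 \cong K_C \otimes L^{-2}$ and the cohomology of $0 \to K_C \otimes L^{-2} \to \mck_{r+1} \to (K_C \otimes L^{-1})^{\oplus(r-1)} \to 0$, yields $h^0(K_C \otimes L^{-2}) \geq g - d - e$ as displayed above.

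Next I would convert this into Clifford data. Using $\deg(K_C \otimes L^{-2}) = 2g - 2 - 2d$ together with $\cliff(M) = \deg(M) - 2(h^0(M)-1)$, the lower bound on $h^0$ becomes the upper bound
\[ \cliff(K_C \otimes L^{-2}) \leq (2g - 2 - 2d) - 2(g - d - e) + 2 = 2e, \]
the inequality reversing precisely because a larger $h^0$ lowers the Clifford index. Provided $K_C \otimes L^{-2}$ (equivalently $\mco(2D)$, its Serre dual) contributes to the Clifford index, the defining minimality gives $\cliff(C) \leq \cliff(K_C \otimes L^{-2}) \leq 2e$, that is $2e \geq c$; and since $e$ is an integer this immediately upgrades to $e \geq \left\lceil \frac{c}{2}\right\rceil$.

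The main obstacle is exactly the eligibility of $2D$, which the conversion step silently assumes: I must know that both $h^0(\mco(2D)) \geq 2$ and $h^1(\mco(2D)) = h^0(K_C \otimes L^{-2}) \geq 2$. The first is harmless, since $D$ is effective and multiplication by a section cutting out $D$ embeds $H^0(\mco(D))$ into $H^0(\mco(2D))$, giving $h^0(\mco(2D)) \geq r+1 \geq 2$. The second, $h^0(K_C \otimes L^{-2}) \geq 2$, amounts to $g - d - e \geq 2$ and is the genuinely delicate requirement; it is precisely the hypothesis ``$2D$ is eligible'' invoked above. The degenerate range where it fails forces $e$ to be small, so the crux of a fully rigorous write-up is to pin down that boundary case separately rather than the main inequality.
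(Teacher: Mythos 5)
Your proposal is, in its main line, exactly the paper's proof: the identification $h^0(\mck_{r+1}) = \dim(\ker(\alpha_L))$ via left exactness, the lower bound $h^0(K_C \otimes L^{-2}) \geq g-d-e$ obtained from the base point free pencil trick and the sequence $0 \to K_C\otimes L^{-2} \to \mck_{r+1} \to (K_C\otimes L^{-1})\otimes\mco^{r-1} \to 0$, and the conversion of that bound into $\cliff(\mco(2D)) \leq 2e$, hence $c \leq 2e$ once $2D$ is allowed to compute the Clifford index. Your execution is in fact cleaner than the paper's: your display $\cliff(K_C\otimes L^{-2}) \leq (2g-2-2d) - 2(g-d-e) + 2 = 2e$ has the inequality pointing the correct way, whereas the paper's corresponding line reads ``$\geq 2g-2-2d-2(g-d+e-1)$,'' a double typo in the direction and in the sign of $e$.

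The only substantive divergence is the eligibility of $2D$, and there two things need saying. First, you have correctly isolated the pressure point: the paper dismisses it with ``since by assumption $2D$ is eligible,'' yet no standing hypothesis guarantees $h^1(\mco(2D)) = h^0(K_C \otimes L^{-2}) \geq 2$; your split --- $h^0(\mco(2D)) \geq r+1 \geq 2$ is automatic, the $h^1$ condition is not --- is the right dissection, and this gap is present in the paper just as much as in your write-up. Second, however, your closing sentence gets the geometry of the bad range backwards. If $h^0(K_C \otimes L^{-2}) \leq 1$, then the same kernel estimate gives $\dim(\ker(\alpha_L)) \leq 1 + (r-1)(g-d+r)$, hence $e \geq g-d-1 \geq 1$ (using $d \leq g-2$): failure of eligibility forces $e$ to be \emph{large}, not small. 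And that largeness does not finish the proof: to conclude $2e \geq c$ in this range one would need $2(g-d-1) \geq c$, while the only a priori control is $c \leq \cliff(L) = d-2r$ (and the general bound $c \leq \left\lfloor \frac{g-1}{2}\right\rfloor$); the comparison $2(g-d-1) \geq d-2r$ fails whenever $3d > 2g+2r-2$, for instance $r=1$, $d=g-2$, $g \geq 7$. So the boundary case you defer is a genuine unclosed step: as stated, the theorem needs either the eligibility of $2D$ (equivalently $h^0(K_C\otimes L^{-2}) \geq 2$) added as a hypothesis, or a separate argument covering $h^0(K_C \otimes L^{-2}) \leq 1$, and neither you nor the paper supplies one.
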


\begin{corollary} Suppose $\cliff(C) = c, $ then for any special line bundle L, we have $\dim(\alpha_L) \geq g- (d-2r) + \left\lceil \frac{c}{2}\right\rceil  $
\end{corollary}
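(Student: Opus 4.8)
The plan is to read this Corollary as a direct reformulation of the preceding Theorem, so that essentially no new work is required beyond a change of variables. For an arbitrary special line bundle $L$, set
\[ e := \dim(\alpha_L) - \bigl(g - (d-2r)\bigr), \]
an integer, since every term is an integer. The corollary to Hopf's Lemma gives $\dim(\alpha_L) \geq g - \cliff(L) = g - (d-2r)$, so $e \geq 0$, and the equation $\dim(\alpha_L) = g - d + 2r + e$ is then precisely the hypothesis of the Theorem. Invoking the Theorem yields $2e \geq \cliff(C) = c$, hence $e \geq \lceil c/2 \rceil$, and substituting back gives $\dim(\alpha_L) = g - (d-2r) + e \geq g - (d-2r) + \lceil c/2 \rceil$, which is the assertion.

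The one point that needs attention is that the Theorem was derived under the standing reduction, made at the opening of this section, that $L$ is base point free; that hypothesis is what allows the base point free pencil trick in the computation of $\mck_2$. So I would first dispose of the base point free case, where the argument above is immediate, and then reduce the general case to it. If $L$ has base divisor $B$, write $L = L'(B)$ with $L'$ base point free, $h^0(L') = h^0(L) = r+1$, and $\deg L' = d' = d - \deg B$; here $\alpha_L$ is the restriction of $\alpha_{L'}$ to $H^0(L') \otimes H^0\bigl((K_C \otimes (L')^{-1})(-B)\bigr)$.

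The main obstacle is precisely this last reduction. Because passing to $L'$ decreases the degree and can shrink the image of the Petri map, the natural comparison runs $\dim(\alpha_L) \leq \dim(\alpha_{L'})$, which is the opposite direction to the lower bound we want, so the bound for $L'$ does not transfer to $L$ for free. I would therefore either argue the estimate for $L$ directly---tracking how the exact sequences and the resulting lower bound for $h^0(K_C \otimes L^{-2})$ behave in the presence of base points, and confirming that $2D$ remains eligible to compute $\cliff(C)$---or restrict the statement to the base point free bundles on which the Theorem was established. Everything else is the formal substitution recorded in the first paragraph.
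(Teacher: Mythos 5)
Your first paragraph coincides, step for step, with the paper's entire proof: the paper simply defines $e$ by $\dim(\alpha_L) = g - \cliff(L) + e$ and invokes the preceding Theorem (miscited there as ``the previous corollary'') to get $e \geq \left\lceil \frac{c}{2}\right\rceil$. So on the core argument you and the paper agree exactly.

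The base-point issue you raise is genuine, but it is a gap in the paper rather than in anything you added: the paper's proof of this corollary makes no mention of base points, and the reduction announced at the start of the section (``remove the basepoints and perform the argument with the new $d$'') only establishes the Theorem for base point free $L$. Your diagnosis of why the reduction does not transfer is correct: writing $L = L'(B)$ with $L'$ base point free and $b = \deg B$, one has $\mathrm{Im}(\alpha_L) \subseteq \mathrm{Im}(\alpha_{L'})$, and the Theorem applied to $L'$ gives the lower bound $g - (d - b - 2r) + \left\lceil \frac{c}{2}\right\rceil$ for the \emph{larger} image, from which no lower bound for $\dim(\alpha_L)$ follows. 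Moreover, if one reruns the section's computation on $L$ itself, using a pencil whose base divisor is $B$, the pencil trick now gives $\mck_2 \cong K_C \otimes L^{-2}(B)$, hence $h^0(K_C \otimes L^{-2}(B)) \geq g - d - e$, and the divisor eligible to compute the Clifford index becomes $2D - B$; this yields only $2e \geq \cliff(C) - b$, strictly weaker than the claim. So the statement as written, ``for any special line bundle $L$,'' is actually only proved for base point free $L$, and your proposed restriction of the statement is the correct reading. Note that this restriction costs nothing for the Main Result: a component of $\wrd$ whose general member has base divisor of degree $b$ has dimension at most $b + \dim W^r_{d-b}$, so the bound $\dim(\wrd) \leq d - 2r - \left\lceil \frac{c}{2}\right\rceil$ still follows from the base point free case.
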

\begin{proof} Our assumption is that  $\dim((\alpha_L)) = g - \cliff(L) + e $ and $ e \geq \left\lceil \frac{c}{2}\right\rceil $ by the previous corollary.     
\end{proof}

\begin{corollary}[Main Result]  $\dim(W_r^d) \leq d-2r -\left\lceil \frac{c}{2}\right\rceil$.  In particular, if C is not hyperelliptic, $\dim(W_r^d) \leq d-2r-1
  $ and if $\dim(W_r^d) = d-2r-1 $, then the Clifford index of C is atmost two.
	\end{corollary}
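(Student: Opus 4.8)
The plan is to deduce the Main Result directly from the tangent-space bound used to prove Marten's theorem, now fed with the sharper lower bound on $\dim(\alpha_L)$ supplied by the preceding corollary. The whole argument is the same combination of ingredients already assembled in the excerpt, organized into a single inequality.

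First I would recall, exactly as in the proof of Marten's theorem, that it suffices to bound the tangent space at a general point of a component of $\wrd$. For a line bundle $L$ lying in $\wrd$ but not in $W_d^{r+1}$, Proposition 4.2 of \cite{ACGH} gives $\dim(T_{L,W_d^r}) = \dim(\coker(\alpha_L))$, so $\dim(\wrd)$ is bounded above by $\dim(\coker(\alpha_L))$. Since the image of $\alpha_L$ is a subspace of $H^0(K_C)$ and $\dim H^0(K_C) = g$, we have the identity $\dim(\coker(\alpha_L)) = g - \dim(\alpha_L)$, where $\dim(\alpha_L)$ denotes the dimension of the image.

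Next I would simply substitute the bound of the corollary just proved, namely $\dim(\alpha_L) \geq g - (d-2r) + \left\lceil \frac{c}{2}\right\rceil$, obtaining
\[ \dim(\wrd) \leq \dim(\coker(\alpha_L)) = g - \dim(\alpha_L) \leq (d-2r) - \left\lceil \frac{c}{2}\right\rceil, \]
which is the asserted inequality. The two refinements then follow by pure arithmetic. If $C$ is not hyperelliptic then $c = \cliff(C) \geq 1$, hence $\left\lceil \frac{c}{2}\right\rceil \geq 1$ and the bound reads $\dim(\wrd) \leq d-2r-1$. For the border case, if $\dim(\wrd) = d-2r-1$ then the main inequality forces $d-2r-1 \leq d-2r - \left\lceil \frac{c}{2}\right\rceil$, i.e. $\left\lceil \frac{c}{2}\right\rceil \leq 1$, which is equivalent to $c \leq 2$.

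I expect the genuine computation here to be trivial; the step needing care is instead justifying that the tangent-space formula may be applied at an appropriate $L$. I must ensure I can pick $L$ outside $W_d^{r+1}$, which requires handling by descending induction on $r$ any component of $\wrd$ that happens to be contained in $W_d^{r+1}$, and I must also confirm that the base-point-free reduction performed just before the corollary does not weaken the final statement for the original degree $d$. Once these standard points are settled, the Main Result is an immediate consequence of the displayed inequality.
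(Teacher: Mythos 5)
Your proposal is correct and follows essentially the same route as the paper: the paper's (implicit) proof of this corollary is precisely the combination of the tangent-space bound $\dim(W_d^r) \leq \dim(\coker(\alpha_L)) = g - \dim(\alpha_L)$ from the proof of Marten's theorem with the bound $\dim(\alpha_L) \geq g - (d-2r) + \left\lceil \frac{c}{2}\right\rceil$ of the preceding corollary, followed by the same arithmetic for the two refinements. Your closing remarks about choosing $L \notin W_d^{r+1}$ and the base-point-free reduction are exactly the standard points the paper also defers to \cite{ACGH}, so nothing is missing.
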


\section{ Discussion of Mumford's refinement of Marten's Theorem }

The last corollary in some sense generalizes Mumford's refinement.  It is not as precise as Mumford's result for the cases that Mumford covers, but it does give different information.  What Mumford proves is 
\begin{theorem}[Mumford] If $ \dim(\wrd) = d -2r-1 $ then C is either trigonal, bi-elliptic, or a smooth plane quintic.
\end{theorem}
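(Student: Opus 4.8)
The plan is to descend from the quantitative bound of the Main Result to the geometric classification, organizing the entire argument by the Clifford index $c=\cliff(C)$. First I would invoke the last Corollary: since $\dim(\wrd)=d-2r-1$ we must have $\lceil c/2\rceil\leq 1$, hence $c\leq 2$; and because $C$ is not hyperelliptic (Marten's theorem forces $\dim(\wrd)=d-2r$ in the hyperelliptic case, not $d-2r-1$) we have $c\geq 1$. Moreover, running the inequalities of the previous section backwards shows that at a generic $L=\mco(D)$ the deficiency is pinched to exactly $e=1$. So the problem reduces to identifying which curves with $c\in\{1,2\}$ actually attain equality, and matching that list against Mumford's.

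For $c=1$ I would appeal to the classification of Clifford index one curves: a non-hyperelliptic curve has $\cliff(C)=1$ if and only if it is trigonal or a smooth plane quintic. This borderline case I would either cite or re-derive from Clifford's theorem together with the equality analysis of the Petri map already set up in Section 2. One then checks directly that the relevant systems — multiples of the $g^1_3$ for a trigonal curve and the $g^2_5$ for a plane quintic — genuinely realize $\dim(\wrd)=d-2r-1$, so both families belong to Mumford's list.

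The substance of the theorem, and the step I expect to be the main obstacle, is the case $c=2$: I must show that among all curves of Clifford index two only the bi-elliptic ones attain $\dim(\wrd)=d-2r-1$, ruling out (for example) smooth plane sextics and tetragonal curves that are not bi-elliptic. The tool is the refined inequality already proved: with $e=1$ we have $\cliff(\mco(2D))\geq 2e=2$, while $\cliff(\mco(2D))\geq \cliff(C)=2$ automatically, so the doubled class $2D$ sits exactly at the Clifford index. A divisor of the special form $2D$ computing the Clifford index is what should force a degree-two morphism. Concretely, I would study the multiplication map $H^0(\mco(D))\otimes H^0(\mco(D))\to H^0(\mco(2D))$ and the residual system $K_C\otimes L^{-2}$, and argue that the extra section recorded by $e=1$ is precisely the obstruction to the gonality pencil mapping $C$ onto a rational curve: instead the relevant pencil (a $g^1_4$) factors through an intermediate curve of genus exactly $1$, i.e. $C$ is bi-elliptic.

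The hard part is precisely this separation within $c=2$, since the Clifford-index bound of the Main Result is symmetric across all curves of a given Clifford index and therefore cannot by itself distinguish bi-elliptic curves from plane sextics or from general tetragonal curves. To make the distinction I would invoke the equality-case structure of Hopf's Lemma, which governs bilinear maps meeting the bound $a+b-1$, to pin down the base locus and the fibre structure of the map defined by $D$, and then combine this with the classification of line bundles computing Clifford index two. The expected payoff is that the doubling $2D$ is incompatible with a plane sextic, whose Clifford index is computed by the plane $g^2_6$ (Clifford dimension two, not a pencil), and that the precise value $e=1$ — read as the genus of the intermediate quotient — excludes the non-bi-elliptic tetragonal curves whose pencil descends only to $\mathbb{P}^1$. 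This incompatibility, which genuinely goes beyond the numerical bounds established above, is the technical crux on which the whole classification turns.
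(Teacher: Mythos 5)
First, a point of reference: this paper never proves Mumford's theorem. It is quoted as Mumford's result, and the surrounding discussion explicitly concedes that the Clifford-index machinery developed here cannot recover it --- the author notes that one would need to control not only $\cliff(D)$ but also $\cliff(2D)$, and that ``the kind of case by case analysis that Mumford did is necessary.'' So your proposal has to stand on its own. Its first half does: the reduction $\dim(\wrd)=d-2r-1 \Rightarrow \cliff(C)\le 2$ is exactly the paper's Main Result corollary, excluding hyperelliptic curves gives $\cliff(C)\ge 1$, and the case $\cliff(C)=1$ is settled by the citable classification of Clifford index one curves as trigonal curves and smooth plane quintics, both already on Mumford's list.

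The case $\cliff(C)=2$ is a genuine gap, and the mechanism you propose for it is provably inadequate. What the paper's argument yields, when $\dim(\wrd)=d-2r-1$ and $L=\mco(D)$ is generic in the component, is that $e=1$ and hence that $2D$ also computes the Clifford index, $\cliff(2D)\le 2e=2$. You propose to deduce bi-ellipticity from this. A smooth plane sextic defeats that inference: taking $D$ to be the line section $g^2_6$, one has $\cliff(D)=2$, and since $h^0(\mco_C(2))=6$ and $\deg(2D)=12$, also $\cliff(2D)=12-2\cdot 5=2$, with $2D$ eligible because $K_C\otimes\mco_C(-2D)=\mco_C(1)$ still has three sections. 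Yet the sextic is not bi-elliptic (its gonality is $5$, while a bi-elliptic curve of genus $10$ has gonality $4$), and it attains no $\dim(\wrd)=d-2r-1$ at all: for instance $W^2_6$ is a single point, of dimension $d-2r-2$. So the condition ``$\cliff(D)=\cliff(2D)=\cliff(C)=2$'' is strictly weaker than the conclusion you need, and no refinement of the numerical argument can bridge that distance. The auxiliary tools you invoke do not exist in usable form: the paper proves Hopf's Lemma by a determinantal dimension count and establishes no ``equality-case structure,'' and the identification of $e=1$ with ``the genus of an intermediate quotient'' is unsupported --- $e$ is a defect in $\dim(\mbox{Im}(\alpha_L))$, and nothing ties it to a curve that $C$ covers. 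Ruling out plane sextics and non-bi-elliptic tetragonal curves requires the genuinely geometric case analysis (Mumford's, or the treatment in \cite{ACGH}) that the Petri/Hopf framework, which sees only the dimensions $h^0(L)$, $h^0(K_C\otimes L^{-1})$, $h^0(K_C\otimes L^{-2})$, cannot detect. As written, your argument proves only the paper's weaker corollary that $\cliff(C)\le 2$.
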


Notice that in trigonal or plane quintic are exactly the cases of Clifford index one and the a bi-elliptic curve does have Clifford index 2.  However there are many curves of clifford index 2 which are not bi-elliptic .  In particular a plane sextic has Clifford index 2.

The new information of this note suggests an approach  to recreating and extending Mumford's result.  Namely to find the curves for which the bound in Marten's theorem is sharp, that is to find the curves for which  $ \dim(\wrd) = d -2r-\left\lceil \frac{c}{2}\right\rceil$ , one has to find curves on which there is a divisor D such that not only is Cliff(D) 'small', but so is Cliff(2D). 
 I suspect that, in the end, the kind of case by case analysis that Mumford did is necessary.  

On the other hand, Corollary 3 gives more precise information than was previously reported in the literature on how the bound in Marten's theorem is dependent on the existence of very  special divisors. To the best of my knowledge, the Corollary provides new information about the case when $ \dim (\wrd) \leq d-2r-2. $ The best result (known to me) is Keem's theorem-
\begin{theorem}[Keem] If C is a smooth algebraic curve of genus $g \geq 11$  and we have integers d and r satisfying $ d \leq g+r-4$ and  we have $ \dim(\wrd) \geq d - 2r -2 $ then C has a $g^1_4$ and hence is of Clifford index 2.
\end{theorem}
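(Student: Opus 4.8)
The plan is to extract everything the Main Result gives for free, and only then confront the genuinely hard part. First I would feed the hypothesis into the Main Result (Corollary~3): combining $\dim(\wrd) \geq d-2r-2$ with $\dim(\wrd) \leq d-2r-\left\lceil \frac{c}{2}\right\rceil$ forces $\left\lceil \frac{c}{2}\right\rceil \leq 2$, hence $\cliff(C) = c \leq 4$. So the elementary Hopf-map machinery of this note already confines $C$ to the curves of Clifford index at most four, using neither $g \geq 11$ nor $d \leq g+r-4$. All that remains is the sharpening from this Clifford bound to the existence of an honest degree-four pencil.

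For that sharpening I would translate Clifford index into gonality. Using the standard sandwich $\mathrm{gon}(C)-3 \leq \cliff(C) \leq \mathrm{gon}(C)-2$, the bound $c \leq 4$ gives $\mathrm{gon}(C) \leq 7$, and the goal is to push this down to $\mathrm{gon}(C)=4$, which is precisely a $g^1_4$. The driving mechanism should be the converse of the way low-gonality curves inflate Brill-Noether loci: a base-point-free $g^1_k$ produces families inside $\wrd$ by adjoining fibres of the pencil, and the dimension so obtained grows as $k$ decreases. Reversing this correspondence -- the Keem--Martens principle that a large $\dim(\wrd)$ is incompatible with high gonality -- the hypothesis $\dim(\wrd) \geq d-2r-2$ should force the minimal pencil to have degree exactly four.

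The hypotheses $g \geq 11$ and $d \leq g+r-4$ enter exactly here, and I expect their role to be the exclusion of the low-Clifford-index curves that still fail to carry a $g^1_4$. The prototype is the smooth plane sextic: it has $\cliff = 2$ but gonality $5$, hence no $g^1_4$, and its genus is $\binom{5}{2}=10$, so the assumption $g \geq 11$ is exactly what removes it. I would expect the surviving Clifford-index $3$ and $4$ possibilities to be ruled out by a Brill-Noether dimension count that becomes effective only once $g$ is this large and $d \leq g+r-4$. Once these are eliminated the existence of the $g^1_4$ yields $\cliff(C) \leq 2$, and distinguishing $\cliff(C)=2$ from the smaller values is a matter of the separately-classified low-gonality strata (hyperelliptic, trigonal, plane quintic).

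I expect this reversal to be the main obstacle, and I do not see how to carry it out with the present tools alone. The Hopf-map estimates are one-directional: they bound $\dim(\wrd)$ and $\cliff(\mco(2D))$, but they neither exhibit a pencil nor separate gonality four from a Clifford-index-two curve of gonality five. Bridging that gap requires the finer structure theory of curves of small Clifford index together with the kind of case-by-case analysis Mumford and Keem carried out; the honest contribution of this note's method is the clean reduction to $c \leq 4$, after which one must hand off to that classification.
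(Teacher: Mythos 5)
Your honesty about where the argument stops is the right conclusion, but the essential fact to know about this statement is that the paper does not prove it either: Keem's theorem is quoted from the literature purely as a benchmark against which the note's own Corollary is measured, and the note explicitly concedes that results of this type probably require ``the kind of case by case analysis that Mumford did.'' So there is no proof in the paper to compare yours against. What the paper does prove with its own tools is the neighboring theorem --- $d \leq g-2$ and $\dim(\wrd) = d-2r-2$ imply $C$ is a plane sextic or $4$-gonal --- and your opening paragraph reconstructs exactly that reduction: the Main Result plus the dimension hypothesis forces $\left\lceil \frac{c}{2}\right\rceil \leq 2$, i.e.\ $\cliff(C) \leq 4$, after which one must appeal to the structure theory of curves of small Clifford index. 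Your diagnosis that the Hopf-map estimates are one-directional and cannot manufacture a $g^1_4$ --- in particular cannot separate a $4$-gonal curve from a plane sextic, which has $\cliff(C)=2$, gonality $5$, and genus $10$, precisely why $g \geq 11$ appears --- is exactly the boundary the paper itself draws between what it proves and what it cites.

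Two cautions on the part you did write. First, the Main Result is established under the paper's standing assumption $d \leq g-2$, whereas Keem's hypothesis is $d \leq g+r-4$; for $r \geq 3$ the latter permits $d > g-2$, so even your first step is not licensed on the full range of Keem's theorem without an additional reduction (one must check that the relevant line bundle, and its square's Serre dual, remain eligible to compute the Clifford index). Your claim that the reduction ``uses neither $g \geq 11$ nor $d \leq g+r-4$'' therefore overstates its reach: some degree bound is doing real work. Second, ruling out the surviving possibilities $\cliff(C) \in \{3,4\}$ (and the plane sextic) is not a lookup in a classification; it is a genuine Brill--Noether argument and is the actual content of Keem's proof, not a formal consequence of anything established in this note. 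So your proposal is not a proof, and the gap you name is real --- but it is the same gap the paper leaves open, and your reconstruction of what the note's machinery can and cannot deliver is accurate.
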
 
Notice that the genus of a plane sextic has genus 10 and that is why (presumably) all curves of lower genus are eliminated.  By comparison, Corollary 2 gives- 

\begin{theorem} Suppose C is a smooth curve , $ d \leq g-2 $ and   $ \dim(\wrd) = d - 2r -2 $, then the Clifford index of C is two and hence C is either a plane sextic or 4 gonal.
\end{theorem}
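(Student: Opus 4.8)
The plan is to read an upper bound for the Clifford index off the Main Result and then try to pin it to the exact value $2$. Since we are given $\dim(\wrd)=d-2r-2$, the inequality $\dim(\wrd)\le d-2r-\lceil c/2\rceil$ forces $\lceil c/2\rceil\le 2$, hence $c\le 4$. To sharpen this I would localize the computation: choose $L=\mco(D)$ a general (hence smooth) point of a component of $\wrd$ of maximal dimension with $L\notin W^{r+1}_d$, and identify $\dim(\wrd)$ with $\dim(T_{L,\wrd})=\dim(\coker(\alpha_L))=d-2r-e$. This yields $e=2$, and the construction behind Theorem 2 then produces the divisor $2D$ with $\cliff(\mco(2D))\le 2e=4$.

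Ruling out $c=0$ is clean: a hyperelliptic curve satisfies Marten's equality $\dim(\wrd)=d-2r$, not $d-2r-2$. The remaining task is to exclude $c\in\{1,3,4\}$, and this is where I expect the argument to stall; in fact I do not believe it can be completed as the statement stands. The obstruction is intrinsic: the Main Result is already attained for these Clifford indices. For a general curve the Brill--Noether dimension $\dim(\wrd)=\rho=g-(r+1)(g-d+r)$ is realized, and with $r=1$, $d=g-2$ one computes $\rho=g-6=d-2r-2$. A general curve of genus $10$ therefore has $\dim(W^1_{g-2})=d-2r-2$ while $\cliff(C)=4$, and a general curve of genus $8$ exhibits the same equality with $\cliff(C)=3$; both meet $d\le g-2$ and every other hypothesis, yet neither has $c=2$.

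The point at which the method tops out is the passage to $2D$: the inequality $\cliff(C)\le\cliff(\mco(2D))$ is valid only if $2D$ is eligible, i.e.\ $h^1(2D)=h^0(K_C\otimes L^{-2})\ge 2$, whereas the only lower bound at our disposal is $h^0(K_C\otimes L^{-2})\ge g-d-e$, which in the extremal examples above equals $0$. There $2D$ need not compute $\cliff(C)$ and all lower control on $c$ is lost. I would therefore state the honest conclusion as $\cliff(C)\le 4$, which follows cleanly from the first paragraph. The sharper value $c=2$ is recoverable only after reimposing a hypothesis in the spirit of Keem's $g\ge 11$ (forcing $\cliff(C)\ge 5$ on a general curve and pushing $\dim(\wrd)$ strictly below $d-2r-2$, thereby killing the counterexamples); under such a hypothesis one would close the argument by invoking the classification of Clifford-index-$2$ curves as plane sextics or $4$-gonal curves.
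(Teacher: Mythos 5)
Your refutation is correct: the theorem is false as stated, and your counterexamples stand. The paper's own ``proof'' is the single sentence that curves of Clifford index two are plane sextics or carry a $g^1_4$; it addresses only the ``hence'' clause, while the crucial claim $\cliff(C)=2$ is never argued. As you observe, the most the paper's machinery could give is $\left\lceil c/2\right\rceil\le 2$, i.e.\ $c\le 4$ (and the paper's own Corollary 3 phrases the $d-2r-1$ case, correctly, as Clifford index \emph{at most} two, before this section promotes the analogous bound to an equality). Concretely, for a Brill--Noether--Petri general curve of genus $10$ with $r=1$, $d=g-2=8$, one has $W^2_8=\emptyset$ and $W^1_8$ of pure dimension $\rho=g-(r+1)(g-d+r)=4=d-2r-2$, while $\cliff(C)=4$ and the gonality is $6$; the genus-$8$ example with $\cliff(C)=3$ is equally valid. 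What the paper dropped, relative to Keem, is the hypothesis $d\le g+r-4$: your examples have $d=g-2>g+r-4$ for $r=1$, which is precisely why they slip past Keem's theorem but kill this one.

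One correction to your proposal, though: your fallback conclusion that $\cliff(C)\le 4$ ``follows cleanly'' is not right, and the reason is exactly the eligibility gap you yourself identified. The Main Result is proved via Theorem 2, whose argument needs $\cliff(C)\le\cliff(\mco(2D))$ and hence $h^0(K_C\otimes L^{-2})\ge 2$; when the only available bound $g-d-e$ is $\le 1$, this can fail and then \emph{no} bound on $c$ survives. Your own construction at higher genus makes this fatal: for a general curve of any genus $g\ge 7$, with $r=1 \nd d=g-2$, one has $\dim(W^1_{g-2})=g-6=d-2r-2$ while $\cliff(C)=\left\lfloor (g-1)/2\right\rfloor$, which is unbounded. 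Already at $g=12$ this gives $\dim(W^1_{10})=6>5=d-2r-\left\lceil c/2\right\rceil$, so Corollary 3 (the Main Result) is itself false; and at $g\ge 11$ it gives counterexamples to your weakened statement $c\le 4$. The honest conclusion is that the hypothesis $\dim(\wrd)=d-2r-2$ together with only $d\le g-2$ imposes no bound whatsoever on the Clifford index; a correct statement must reinstate Keem-type hypotheses, and even then the conclusion should read $\cliff(C)\le 2$ rather than an equality.
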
 
\begin{proof} Every curve of Clifford index two is either a plane sextic or possesses a $g^1_4$ .
\end{proof} 

This improves upon Keem's result by eliminating the lower bound on the genus and improving the bound on r to the optimal value.  

\section{ Proof of Hopf's Lemma}

We wish to thank Prof. Mohan Kumar for allowing us to include this proof.  While it is conceivable that this proof is known to others, I can find no reference to it in the literature.  I would sincerely appreciate any comments or references on this topic.  I note that while this result is called Hopf's theorem in 'The Geometry of Algebraic Curves', it seems also to be known as Hopf's lemma or Hopf's lemma on bi-linear forms in the topology literature.  We call it a lemma in deference to it's usage in topology, which seems to have been the original use of the lemma.
\begin{lemma}[Hopf] 

Let $A\otimes B \to C$ be a bi-linear map of vector spaces defined over an algebraically closed field with $\dim(A) = a \nd \dim(B) = b$ .  If the map is injective on each factor, then $\dim(Im(A \otimes B) \geq a+b - 1$\end{lemma}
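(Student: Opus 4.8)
The plan is to recast the statement as an upper bound on the kernel of the induced linear map and then invoke the projective dimension theorem for the Segre variety. Write $\tilde\mu\colon A\otimes B \to C$ for the linear map induced by the bilinear map, so that $\mathrm{Im}(A\otimes B)$ is exactly $\mathrm{Im}(\tilde\mu)$ and $\dim\mathrm{Im}(\tilde\mu) = ab - \dim\ker(\tilde\mu)$. Hence it suffices to prove $\dim\ker(\tilde\mu) \le (a-1)(b-1)$, because this yields $\dim\mathrm{Im}(\tilde\mu) \ge ab-(a-1)(b-1) = a+b-1$.

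First I would translate the hypothesis. Injectivity on each factor is equivalent to the no-zero-divisor condition $\mu(u,v)=0 \Rightarrow u=0 \text{ or } v=0$: if $\mu(u,v)=0$ with $u\neq 0$, then $v$ lies in the kernel of the injective map $w\mapsto\mu(u,w)$, forcing $v=0$, and the converse is the same computation run backwards. Since $\tilde\mu(u\otimes v)=\mu(u,v)$, this says precisely that $K:=\ker(\tilde\mu)$ contains no nonzero decomposable tensor $u\otimes v$. Projectivizing, $\pp(K)$ is a linear subspace of $\pp(A\otimes B)=\pp^{\,ab-1}$ that is disjoint from the Segre variety $\Sigma\subseteq\pp(A\otimes B)$ of decomposable tensors, i.e. the image of the Segre embedding $\pp(A)\times\pp(B)\hookrightarrow\pp(A\otimes B)$, which has dimension $(a-1)+(b-1)=a+b-2$.

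The final step is the projective dimension theorem, and this is where algebraic closure enters. In $\pp^{N}$ an irreducible projective variety of dimension $n$ meets every linear subspace of dimension $\ge N-n$; equivalently, a linear subspace disjoint from it has dimension at most $N-n-1$. Applying this with $N=ab-1$, $n=a+b-2$, and the linear space $\pp(K)$, disjointness from $\Sigma$ gives $\dim\pp(K)\le (ab-1)-(a+b-2)-1 = ab-a-b$, that is $\dim K \le (a-1)(b-1)$, which is the desired bound.

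I expect the only genuine obstacle to be isolating the cleanest statement of the dimension theorem and making explicit its dependence on the ground field: the contrapositive step asserts $\Sigma\cap\pp(K)\neq\emptyset$ whenever the dimensions are too large, and the existence of such an intersection point requires the field to be algebraically closed. Indeed, over $\mathbb{R}$ the complex multiplication $\mathbb{R}^2\times\mathbb{R}^2\to\mathbb{R}^2$ has no zero divisors yet an image of dimension $2 < a+b-1 = 3$, so algebraic closure is used essentially and not merely for convenience.
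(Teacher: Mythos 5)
Your proof is correct, but it is not the paper's argument; it is a genuinely different (and in one respect more economical) route. The paper argues by contradiction in the space of maps: after reducing to $\dim C = a+b-2$, it fixes a basis $b_1,\ldots,b_n$ of $B$, takes the $(b-1)$-dimensional linear span $H \subset \mathbb{P}(\mathrm{Hom}(A,C))$ of the maps $f_{b_i}\colon A \to C$, and intersects $H$ with the determinantal variety $X_1$ of non-injective maps, whose codimension $(c-(a-1))(a-(a-1)) = b-1$ it quotes from the general formula for rank-$\le r$ loci (\cite{ACGH} p.~67); the projective dimension theorem then yields a point of $H \cap X_1$, i.e.\ a non-injective $f_{\sum a_i b_i}$, contradicting the hypothesis. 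You instead work in $\mathbb{P}(A\otimes B)$: the hypothesis becomes disjointness of the linear space $\mathbb{P}(\ker\tilde\mu)$ from the Segre variety, and the same dimension theorem bounds $\dim\ker\tilde\mu \le (a-1)(b-1)$ directly. Both proofs turn on exactly the same intersection-theoretic principle over an algebraically closed field (indeed the Segre variety is itself the rank-$\le 1$ determinantal locus in $A \otimes B$, so the two arguments apply the same principle to determinantal loci of different ranks in different ambient spaces), but yours replaces the general codimension formula $(m-r)(n-r)$ by the single obvious fact that the Segre variety has dimension $(a-1)+(b-1)$, being the image of $\mathbb{P}(A)\times\mathbb{P}(B)$; this makes your proof more self-contained, and it runs forward (a quantitative bound on the kernel) rather than by contradiction. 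Your closing example --- complex multiplication $\mathbb{R}^2 \times \mathbb{R}^2 \to \mathbb{R}^2$, with no zero divisors but image of dimension $2 < 3$ --- is a worthwhile addition that the paper lacks, since it shows algebraic closure is essential and not an artifact of the method.
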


\begin{proof}

The proof uses the fact that if we let $M=(x_{ij})$ be the space of matrices of size m x n, and set $X_r \subset \mathbb{A}^{mn} $ to be all the matrices of rank at most r, then $X_r$ has codimension equal to $(m-r)(n-r)$ in $\mathbb{A}^{mn} $.  While Prof. Kumar communicated a very short algebraic proof of this fact, in the spirit of this note we prefer to reference \cite{ACGH}  p. 67. 

Firstly, note we may trivially assume that $ b \geq 2$  and that further  $\dim(C)= c = a+b-2 $ . 
Denote by $f_b$ the map $ A \to C $ given by $\otimes b$ with $ b \in B $.  By assumption, all the maps $f_b$ are injective.  In fact, if  $b_1, \ldots b_n$ are a basis for B, then, setting $ f_1, \ldots f_n$ to be the linear maps corresponding to the $b_i$, then the $f_i$   must be linearly independent as elements of  $ \mathbb{P}\left(\Hom\left(A,C\right)\right) $ because any linear relation $ \sum a_i f_i = 0 $ would imply that $ f_{\sum a_i b_i} = 0$ . Consequently these elements span a linear space $ H \subset \mathbb{P}^{ac-1} $ of dimension b-1.  Consider $ X_1 \subset \mathbb{P}^{ac-1} $ The subset of maps $A \to C $ which are not injective. This has codimension 
	\[ (c-(a-1) )(a - (a-1) ) = c - a + 1 = a+b-2 -a + 1 = b-1 
\]
Hence $ X_1 \cap H \neq \emptyset $. Since we have assume our base field is algebraically closed this means there exists a point $ p \in X_1 \cap H$ with coordinates $(a_1, \ldots a_n) $ and then $ \sum a_i f_i $ is not injective , contradicting the hypothesis of the lemma .

\end{proof}

\bibliographystyle{alpha}

\begin{thebibliography}{ACGH84}

\bibitem[ACGH84]{ACGH}
E.~Arborello, M.~Cornalba, P.~Griffiths, and J.~Harris.
\newblock {\em Geometry of Algebraic Curves}.
\newblock Grundlehren der mathematischen Wissenschaften. Springer, Berlin,
  1984.

\end{thebibliography}

\end{document}